\theoremstyle{plain}
\newtheorem{lema}{Lemma}
\newtheorem{prop}[lema]{Proposition}
\newtheorem{teo}[lema]{Theorem}
\newtheorem{coro}[lema]{Corollary}
\theoremstyle{remark}
\newtheorem{obs}[lema]{Remark}
\theoremstyle{definition}
\newtheorem{defi}[lema]{Definition}
\newcommand{\st}{\textrm{st}}
\newcommand{\lk}{\textrm{lk}}
\newcommand{\dime}{\textrm{dim}}
\newcommand{\qq}{\mathbb{Q}}
\newcommand{\x}{\mathcal{X}}
\newcommand{\kp}{\mathcal{K}}
\newcommand{\kun}{\mathfrak{K}}
\begin{document}

\title[The fixed point property in every weak homotopy type]{The fixed point property in every weak homotopy type}

\author[J.A. Barmak]{Jonathan Ariel Barmak $^{\dagger}$}

\thanks{$^{\dagger}$ Researcher of CONICET. Partially supported by grants UBACyT 20020100300043, CONICET PIP 112-201101-00746 and ANPCyT PICT-2011-0812.}

\address{Departamento de Matematica\\
FCEyN-Universidad de Buenos Aires\\
Buenos Aires, Argentina}

\email{jbarmak@dm.uba.ar}

\begin{abstract}
The Brouwer fixed point theorem states that the disk $D^n$ has the fixed point property. More generally, by the Lefschetz fixed point theorem any compact ANR with trivial rational homology has the fixed point property. In this note we prove that for any connected compact CW-complex $K$ there exists a space $X$ weak homotopy equivalent to $K$ which has the fixed point property. The result is known to be false if we require $X$ to be a polyhedron. The space $X$ we construct is a non-Hausdorff space with finitely many points.   
\end{abstract}

\subjclass[2010]{55M20, 55U10, 54H25, 06A07}

\keywords{Fixed point property, simplicial complexes, weak homotopy types, Lefschetz fixed point theorem, finite topological spaces.}

\maketitle

A topological space $X$ has the fixed point property if every continuous map $f:X\to X$ has a fixed point. We will prove the following

\begin{teo} \label{main}
  Let $K$ be a connected compact CW-complex. Then there exists a topological space $X$ weak homotopy equivalent to $K$ with the fixed point property.
 
\end{teo}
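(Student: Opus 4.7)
The plan is to realize $X$ as a finite non-Hausdorff $T_0$ space. Via the McCord correspondence, finite $T_0$ spaces correspond bijectively to finite posets, every such space is weak homotopy equivalent to (the realization of) its order complex, and continuous self-maps correspond to order-preserving self-maps. Since a finite poset admits only finitely many order-preserving self-maps, this viewpoint drastically restricts the class of continuous self-maps we must control, which makes it plausible that a careful combinatorial choice of $X$ yields the fixed point property without altering its weak homotopy type.

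The first step is to reduce to the case of a finite simplicial complex $L$, since every compact CW-complex is homotopy equivalent to one. The face poset $\x(L)$ is, by McCord's theorem, a finite space weak equivalent to $|L|$, but it typically fails the fixed point property: a free combinatorial symmetry of $L$ (say, an antipodal-like action on a triangulated $S^{2n-1}$) induces an order-preserving self-map of $\x(L)$ without fixed points. The heart of the proof should therefore be a poset enlargement $\x(L)\leadsto X$ that (a) preserves the weak homotopy type, by attaching auxiliary elements via beat/weak-point moves or non-Hausdorff mapping-cylinder type gadgets, and (b) creates enough rigidity to force every order-preserving self-map of $X$ to fix some element.

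A natural organizing principle is the Lefschetz fixed point theorem for finite spaces: arrange that every self-map $f\colon X\to X$ either has $L(f)\neq 0$ (and hence has a fixed point) or else is combinatorially constrained to factor through a contractible sub-poset of $X$, in which case it clearly has a fixed point. The auxiliary elements should be attached in a symmetric or ``triangular'' pattern, so that any self-map avoiding both cases would have to factor through a configuration the construction forbids. The non-trivial input is therefore a combinatorial lemma saying that the added elements, while invisible to the weak homotopy type, leave only such ``trivializable'' self-maps behind.

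The main obstacle will be making the enlargement uniform and explicit enough to handle arbitrary weak homotopy types, particularly those that classically obstruct the fixed point property for polyhedra (free actions on odd-dimensional spheres, self-homotopy-equivalences of Lie groups, and so on). Overcoming this requires exploiting the genuine gap between homotopy classes of self-maps of $X$, which are dictated by $|L|$ and may contain many fixed-point-free classes, and the actual continuous self-maps of $X$, which are governed by the discrete order structure of the poset and can be made dramatically more rigid by the enlargement.
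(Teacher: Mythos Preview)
Your proposal is a strategy outline, not a proof: you never actually construct the enlargement $X$, nor do you supply the ``combinatorial lemma'' you invoke. The paper's argument shows that the missing content is substantial and quite specific. The core construction is not a generic rigidification of $\x(L)$ but rather the following: choose integral classes $\alpha_{k,l}\in H_k(K)$ giving a rational basis, realize each as the image of the fundamental class of a closed oriented $k$-pseudomanifold $M_{k,l}$, subdivide these so that every simplicial automorphism of $M_{k,l}$ fixes a vertex (an \emph{asymmetry} step that requires its own argument and fails for $k=1$), and then attach to a fine subdivision $K^{s_n}$ a three-part mapping cylinder $C_{k,l}=C^a_{k,l}\cup C^b_{k,l}\cup C^c_{k,l}$ for each $(k,l)$. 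The dichotomy that replaces your vague ``either $L(f)\neq 0$ or $f$ factors through a contractible subposet'' is: for a simplicial self-map $f$ of the resulting complex $L$, either every diagonal entry of $f_*$ on $H_k(L;\qq)$ vanishes for $k\ge 1$ (so Lefschetz applies), or $f$ maps some $M_{k,l}^{s_{k-1}}$ onto itself by an automorphism, and asymmetry gives the fixed simplex. Proving this dichotomy is the real work: it uses an $\ell^1$-type norm on simplicial chains, a lemma bounding by $(k+1)!$ the norm of a certain chain-level retraction of a mapping cylinder $Z_\psi$ onto $C_*(K')$, and a counting argument showing that equality in that bound forces the image to lie in the bottom copy $M_{k,l}^{s_{k-1}}$.

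Even granting all of the above, the case $H_1(K)\neq 0$ cannot be handled by your scheme, because no triangulation of $S^1$ is asymmetric. The paper resolves this separately by exhibiting an explicit $14$-point finite model $\kun$ of $S^1$ with the fixed point property (proved via a uniqueness statement for short $1$-cycles representing twice a generator), and then gluing non-Hausdorff mapping cylinders $B_{h_l}$ of weak equivalences $\x(M_{1,l})\to\kun$ onto $\x(L)$. None of these ingredients---the pseudomanifold representatives, the asymmetry of their subdivisions, the norm-controlled retraction lemma, the long middle cylinder $C^b$ separating the two regimes, or the special $S^1$ model---appears in your outline, and without them the Lefschetz-or-contractible alternative you propose cannot be established.
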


If we require $X$ to be a polyhedron, the result is known to be false. Every polyhedron homotopy equivalent to $S^n$ lacks the fixed point property for $n\ge 3$ (see \cite{Lop} or the proof of \cite[Theorem]{Wag} for example). The space $X$ we find has finitely many points. Therefore, we are also proving the following result. The homotopy type of any connected compact CW-complex can be realized by the order complex of a finite partially ordered set with the fixed point property.

A simplicial complex $K$ has the \textit{fixed simplex property} if for every simplicial map $f:K\to K$ there exists a simplex $\sigma \in K$ such that $f(\sigma)=\sigma$ or, equivalently, if every simplicial endomorphism of $K$ fixes a point of the realization of $K$. The spheres $S^n$ do not have the fixed point property, but they do have triangulations with the fixed simplex property provided that $n\ge 2$ (see Proposition \ref{subdivasim}). We will show that for every simply connected compact polyhedron $K$ there exists a finite simplicial complex $L$ homotopy equivalent to $K$ with the fixed simplex property. Then the finite topological space $\x (L)$ associated to $L$ has the fixed point property. This will prove Theorem \ref{main} for simply connected complexes. If $K$ is not simply connected we will be able to modify the construction above to obtain a finite model of $K$ with the fixed point property but it will not be the poset of faces of a complex.

We sketch in a few lines the idea of the construction of $L$ from $K$ and the main parts of the proof. We first consider integer homology classes $\alpha _{k,l}\in H_k(K)$ which are a basis of the rational $k$-homology of $K$ and then realize each $\alpha _{k,l}$ as the image of the fundamental class $[M_{k,l}]$ of a $k$-dimensional oriented pseudomanifold through a map $M_{k,l}\to K$. We construct $L$ as follows. We find a sufficiently fine ($j$-th barycentric) subdivision $K^j$ of $K$ and attach to $K^j$ mapping cylinders of the maps $M_{k,l}^j\to K^j$ and of approximations to the identities $M_{k,l}^j\to M_{k,l}^r$ ($r<j$). In this way we manage to concentrate the homology classes $\alpha _{k,l}$ in ``few'' simplices, those of $M_{k,l}^r$. The complex $L$ satisfies this singular property: if $c=\sum t_i \sigma_i \in C_k(L)$ is a cycle such that $\sum |t_i|$ is less than or equal to the number of $k$-simplices in $M_{k,l}^r$ and the $(\alpha _{k,l}\otimes 1_{\qq})$-coordinate of $[c]\in H_k(L; \qq)$ is different from zero, then $c$ is, up to sign, the fundamental cycle of $M_{k,l}^r$. Therefore if $f$ is a simplicial endomorphism of $L$, one of the following holds: (1) the matrices of $f_*:H_k(L; \qq) \to H_k(L; \qq)$ in the basis $\{\alpha _{k,l}\otimes 1_{\qq}\}_l$ have all the diagonal zero for each $k\ge 2$, in which case the Lefschetz fixed point theorem gives us a fixed simplex, or (2) the map $f$ maps one of the $M_{k,l}^r$ into itself and $f|_{M_{k,l}^r}: M_{k,l}^r \to M_{k,l}^r$ is a simplicial automorphism. The argument is then complete if we show that the pseudomanifolds $M_{k,l}^r$ can be assumed to be \textit{asymmetric}, in the sense that every automorphism fixes a vertex.                   


\medskip

The following notions are a rigid version of Gromov's simplicial volume and the $\ell^1$-norm. Let $C$ be a finitely generated free $\mathbb{Z}$-module with a fixed basis $\{b_1, b_2, \ldots , b_r\}$. The \textit{norm} $\| c \|$ of an element $c=\sum{t_i b_i}\in C$ is $\sum{|t_i|}$. If $f:C\to C'$ is a morphism between finitely generated free $\mathbb{Z}$-modules (each of them with a chosen basis), the \textit{norm} of $f$ is $\| f \|= \max\limits_{c\neq 0} \frac{\| f(c) \|}{\|c\|} $. Note that $\|f\|$ is well-defined since $\|f(c)\|\le \|c\| \max\limits_i \|f(b_i)\|$, where $\{b_i\}_i$ is the chosen basis of $C$. If $C=0$ define $\|f\|=0$. For a composition $fg$ we have $\|fg\|\le \|f\| \|g\|$.

Let $C_*$ be a finitely generated free chain complex with a given basis for each $C_k$. The \textit{norm} of $\alpha \in H_k(C)$ is $\| \alpha \|=\min \{ \|c\| \ | \ [c]=\alpha \}$. Here $[c]$ denotes the class of a cycle $c$ in homology.


When $K$ is a finite simplicial complex we will always consider the chain complex $C_*(K)$ with the usual basis for $C_k(K)$ given by one oriented $k$-simplex $[v_0,v_1, \ldots ,v_k]$ for each $k$-simplex $\{v_0, v_1, \ldots, v_k\} \in K$. We denote by $H_*(K)$ the simplicial homology of $K$ with integer coefficients.

If $M$ is a closed $n$-dimensional oriented pseudomanifold, the norm $\|[M]\|\in H_n(M)$ of its fundamental class is the number of $n$-simplices in $M$. 

If $\varphi :K \to L$ is a simplicial map between finite simplicial complexes, $\varphi _* :C_*(K)\to C_*(L)$ maps an oriented $k$-simplex $[v_0, v_1, \ldots , v_k]$ to $[\varphi(v_0), \varphi(v_1), \ldots , \varphi(v_k)]$ if $\varphi(v_i)\neq \varphi(v_j)$ for $i\neq j$ and to $0$ otherwise. Therefore $\varphi _k:C_k(K) \to C_k(L)$ has norm at most $1$.


If $L$ is a finite simplicial complex and $K$ is a subdivision of $L$, the subdivision operator $\lambda : C_* (L) \to C_*(K)$ is a homotopy inverse to the chain map induced by any simplicial approximation to the identity and maps a $k$-simplex $\sigma \in L$ into a signed sum of all the $k$-simplices of $K$ contained in $L$. Therefore, the norm of $\lambda _k :C_k(L) \to C_k(K)$ is the maximum number of $k$-simplices in which a $k$-simplex of $L$ is subdivided. In particular, when $K=L'$ is the first barycentric subdivision of $L$, $\| \lambda _k \|=(k+1)!$ if $\dim L\ge k$. In this case, if $\alpha \in H_k(L)$, $\|\lambda _* (\alpha)\|\le \| \lambda _k\| \| \alpha \| \le (k+1)! \| \alpha \|$. In general the equality $\| \lambda _* (\alpha) \|=(k+1)! \| \alpha \|$ does not hold if $k<\dim (L)$.   

Let $K$ be a finite simplicial complex. The barycenter of a simplex $\sigma \in K$ will be denoted by $b(\sigma)$ or $\hat{\sigma}$. The simplices of $K'$ are then the sets $\{ \hat{\sigma} _0 , \hat{\sigma}_1, \ldots , \hat{\sigma}_k \}$ such that $\sigma _i \subsetneq \sigma _{i+1}$ for every $i$. Given a simplicial map $\varphi :K\to L$, we denote by $\varphi ' :K' \to L'$ the map $b(\sigma) \mapsto b(\varphi (\sigma))$ and in general $\varphi ^j :K^j \to L^j$ is the map induced in the $j$-th barycentric subdivisions.

In \cite[Example 2.4]{BB}, Baclawski and Bj\"orner construct a finite model of $S^2$ with the fixed point property. The following definition is inspired by their example. 

\begin{defi}
We will say that a complex $K$ is \textit{asymmetric} if there exists a vertex $v\in K$ which is fixed by every simplicial automorphism of $K$.
\end{defi}

\begin{prop} \label{subdivasim}
Let $M$ be an $n$-dimensional pseudomanifold with $n\ge 2$. Then there exists a subdivision $L$ of $M$ such that the $j$-th barycentric subdivision $L^j$ of $L$ is asymmetric for every $j\ge 0$. 
\end{prop}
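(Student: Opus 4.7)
The plan is to build $L$ by a cascade of nested stellar subdivisions inside a single $n$-simplex of $M$, producing a distinguished vertex $v_0$ whose link type in each $L^j$ can be recognized combinatorially.

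I would start with any triangulation of $M$, replace it by a sufficiently fine barycentric subdivision so that every vertex has a link with many simplices, then pick an $n$-simplex $\tau_0$. Star-subdivide $\tau_0$ at its barycenter $w_1$, pick one of the $n+1$ new $n$-simplices and call it $\tau_1$, star-subdivide $\tau_1$ at $w_2$, and iterate $N$ times; let $L$ denote the resulting subdivision of $M$ and set $v_0 := w_N$. Then $\lk(v_0, L) = \partial \tau_{N-1} \cong \partial \Delta^n$ has exactly $n+1$ vertices, the minimum possible, while every other vertex of $L$ has strictly more vertices in its link. Hence $L$ is asymmetric, giving the case $j = 0$.

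For $j \ge 1$, iterating the standard identity $\lk(v, K^1) = \lk(v, K)^1$ for an original vertex $v$ yields $\lk(v_0, L^j) = (\partial \Delta^n)^j$. I would argue by induction on $j$ that $v_0$ is the unique vertex of $L^j$ whose link is isomorphic to $(\partial \Delta^n)^j$ and which has a neighbor whose link is isomorphic to $(\partial \Delta^n)^1$. Any competitor $u \in L^j$ with link $\cong (\partial \Delta^n)^j$ is either an original vertex of $L^{j-1}$ (handled by the inductive hypothesis together with faithfulness of barycentric subdivision on the iso type of $(\partial \Delta^n)^{j-1}$) or else $u = b(\sigma)$ for some $\sigma \in L^{j-1}$ of positive dimension. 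In the latter case, the formula $\lk(b(\sigma), K^1) = (\partial \sigma)^1 \ast \lk(\sigma, K)^1$ makes $\lk(u, L^j)$ a nontrivial simplicial join unless $\sigma$ is top-dimensional, contradicting the key combinatorial fact that $(\partial \Delta^n)^j$ admits no nontrivial simplicial join decomposition for $n \ge 2$; if $\sigma$ is top-dimensional, then $\lk(u, L^j) \cong (\partial \Delta^n)^1$, which for $j \ge 2$ has strictly fewer vertices than $(\partial \Delta^n)^j$. The remaining exceptional case is $j = 1$ with $\sigma$ an $n$-simplex of $L$: here the secondary neighbor condition settles it, because the $n+1$ neighbors of $v_0$ that are barycenters of $n$-simplices of $L$ containing $v_0$ all have link $\cong (\partial \Delta^n)^1$, whereas every neighbor of $b(\sigma)$ is a barycenter of a proper face of $\sigma$ whose link is a nontrivial simplicial join and hence not isomorphic to $(\partial \Delta^n)^1$.

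The main obstacle is the key combinatorial fact that $(\partial \Delta^n)^j$ is not a nontrivial simplicial join for $n \ge 2$ and $j \ge 0$. For $j = 0$ this follows because any nontrivial splitting $V(A) \sqcup V(B)$ of the vertex set $\{v_0, \ldots, v_n\}$ would make $V(A) \cup V(B)$ itself a simplex of $A \ast B$, but it is not a simplex of $\partial \Delta^n$. For $j \ge 1$, I would give a vertex-degree analysis: in any nontrivial join $A \ast B$ every vertex of $A$ has degree at least $|V(B)|$, yielding a lower bound on the maximum degree that $(\partial \Delta^n)^j$ can be shown to violate via the degree recursion inherited from $(\partial \Delta^n)^{j-1}$ under barycentric subdivision. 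A subsidiary technical issue is the faithfulness claim $K^1 \cong (\partial \Delta^n)^j \Rightarrow K \cong (\partial \Delta^n)^{j-1}$, which I would handle by inductively recovering the dimension of the preimage simplices from the local combinatorics of the barycentric subdivision.
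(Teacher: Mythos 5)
Your overall strategy coincides with the paper's: produce $L$ with a vertex $v_0$ singled out by an automorphism-invariant local quantity, then show the invariant continues to single out $v_0$ in every barycentric subdivision. The gap is in your choice of invariant. ``Link isomorphic to $(\partial\Delta^n)^j$'', i.e.\ a vertex whose link is as small as possible, does not survive barycentric subdivision, and the secondary neighbor condition you add does not rescue it. Concretely, at $j=1$: if $\sigma$ is an $n$-simplex of $L$ containing $v_0$, then $\lk _{L'}(b(\sigma))=(\partial \sigma)'\cong (\partial\Delta^n)^1$, and among the neighbors of $b(\sigma)$ in $L'$ is $b(\{v_0\})$, whose link is $\lk _L(v_0)'\cong (\partial\Delta^n)^1$ as well. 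Your claim that every neighbor of $b(\sigma)$ has a link which is a nontrivial join overlooks the neighbors that are barycenters of \emph{vertices} of $\sigma$: for a vertex $v\in\sigma$ the join formula degenerates to $\lk _{L'}(b(v))=\lk _L(v)'$, since the factor $(\partial v)'$ is empty. So $b(\sigma)$ satisfies both of your conditions and uniqueness already fails at $j=1$. It fails worse at $j=2$: in $(L')'$ every old vertex of $L'$ lying in an $n$-simplex $\tau$ has the neighbor $b(\tau)$ with link $(\partial\tau)'\cong(\partial\Delta^n)^1$, so the secondary condition is satisfied automatically by \emph{all} old vertices, and every barycenter of an $n$-simplex of $L$ becomes a competitor indistinguishable from $v_0$ by your invariant. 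This is the structural obstruction to any ``minimal link'' criterion: each barycentric subdivision manufactures fresh vertices (the barycenters of maximal simplices) whose links are again $(\partial\Delta^n)'$.

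The paper's proof goes to the opposite extreme: it uses $\deg _L(v)$, the number of maximal simplices containing $v$, arranges $L$ so that $v_0$ is the unique \emph{maximizer}, and computes $\deg _{L'}(b(\sigma))=(k+1)!(n-k)!\deg _L(\sigma)$ for $\dim\sigma =k$; a short case analysis shows the maximum over $L'$ equals $n!\,d(L)$ and is attained only at $b(\{v_0\})$, so the unique-maximizer property propagates by induction. Maximal degree works where minimal link does not, because the multiplicative factors $(k+1)!(n-k)!$ weighted by $\deg _L(\sigma)$ are dominated by the $k=0$ case. Your iterated stellar subdivision could easily be rigged to produce a unique vertex of maximal degree instead, after which the paper's induction applies verbatim. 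Separately, the two auxiliary facts you postpone --- that $(\partial\Delta^n)^j$ admits no nontrivial join decomposition, and that $K'\cong(\partial\Delta^n)^j$ forces $K\cong(\partial\Delta^n)^{j-1}$ --- are each nontrivial and only sketched, but the decisive problem is the failure of uniqueness described above.
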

\begin{proof}
Given a complex $K$ and a simplex $\sigma \in K$, we denote by $\deg _K(\sigma)$ the number of maximal simplices of $K$ containing $\sigma$. Then $\deg _K(\sigma)$ is the number of maximal simplices in the link $\lk _K(\sigma)$ if $\sigma$ is not maximal, and $1$ if $\sigma$ is maximal in $K$. Define $d(K)=\max\limits_{\sigma \in K} \deg _K(\sigma)= \max\limits_{v \in K} \deg _K(v) $ where the second maximum is taken over all the vertices $v$ of $K$. It is not hard to see that there exists a subdivision $L$ of $M$ which contains a vertex $v_0$ such that $\deg _L(v)<\deg _L(v_0)$ for any other $v\in L$. Then clearly $L$ is asymmetric since the degree $\deg$ is preserved by automorphisms of $L$ and thus $v_0$ is fixed by any such an automorphism. Moreover, we will show that $\deg _{L'}(v)<\deg _{L'}(v_0)$ for every vertex $v_0\neq v \in L'$. It follows by induction that $L^j$ is asymmetric for each $j\ge 0$.

Let $v\in L'$, $v=b(\sigma)$ where $\sigma$ is a simplex of $L$. Let $k=\dim \sigma$. A maximal simplex of $\lk _{L'}(b(\sigma))$ is obtained by choosing a chain $\sigma^0 < \sigma^1< \ldots < \sigma ^{k-1}$ of proper faces of $\sigma$ and a chain $\sigma^{k+1} < \sigma^{k+2}< \ldots < \sigma ^{n}$ of simplices containing $\sigma$. There are $(k+1)$ possible choices for $\sigma ^{k-1}$, $k$ for $\sigma^{k-2}$, $\ldots$, $2$ for $\sigma ^0$. On the other hand there are $\deg_L (\sigma)$ choices for $\sigma ^n$, $(n-k)$ for $\sigma ^{n-1}$, $(n-k-1)$ for $\sigma ^{n-2}$, $\ldots$, $2$ for $\sigma ^{k+1}$. Therefore
$$\deg _{L'}(b(\sigma))=(k+1)!(n-k)!\deg _L(\sigma).$$

\noindent -If $k=n$, $\deg _{L'}(b(\sigma))=(n+1)!<n!d(L)$ since $d(L)=n+1$ only when the pseudomanifold is isomorphic to the boundary of an $(n+1)$-simplex, which is not the case since $L$ is asymmetric.

\noindent -If $k=n-1$, $\deg _{L'}(b(\sigma))=n!\deg_L(\sigma)=2n!<n!d(L)$.

\noindent -If $1\le k\le n-2$, ${n \choose k}\ge n$, so $\deg _{L'}(b(\sigma))\le (n-1)!(k+1)\deg _L(\sigma)< n!d(L)$.


\noindent -If $k=0$, $\sigma=v$ is a vertex of $L$ and  $\deg _{L'}(v)=n!\deg _{L}(v)$, which is strictly smaller than $n!d(L)$ if $v\neq v_0$ and which is $n!\deg _L(v_0)=n!d(L)$ if $v=v_0$. Then $d(L')=n!d(L)$ and this degree is only achieved by $v_0$.

\end{proof}

\begin{lema} \label{subcomplejo}
Let $\tau$ be a simplex. Let $\sigma_0, \sigma_1, \ldots , \sigma_k$ be faces of $\tau$ such that for every $0\le i,j \le k$ one of the following holds:
\begin{enumerate}
\item $\sigma _i\subseteq \sigma _j$.
\item $\sigma _j \subseteq \sigma _i$.
\item $\sigma _i \cap \sigma _j =\emptyset$.
\end{enumerate}
Then the convex hull of $ \{\hat{\sigma}_0, \hat{\sigma}_1, \ldots , \hat{\sigma}_k\}$ is a subcomplex of $\tau '$.
\end{lema}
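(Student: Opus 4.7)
Let $C$ denote the convex hull of $\{\hat\sigma_0,\ldots,\hat\sigma_k\}$ inside $\tau$. The plan is to introduce the auxiliary collection
\[
\tilde\Sigma = \Big\{\bigcup_{i\in I}\sigma_i \;\Big|\; \emptyset\ne I\subseteq\{0,\ldots,k\} \text{ with } \{\sigma_i\}_{i\in I} \text{ pairwise disjoint}\Big\}
\]
(so $\Sigma\subseteq\tilde\Sigma$ via singleton $I$) and to prove that $C$ equals the subcomplex of $\tau'$ whose simplices are the chains in $\tilde\Sigma$ ordered by inclusion. One containment is easy: for $\rho=\bigcup_{i\in I}\sigma_i\in\tilde\Sigma$ with the $\sigma_i$'s disjoint, a direct computation gives $\hat\rho=\sum_{i\in I}(|\sigma_i|/|\rho|)\hat\sigma_i$, a convex combination, so $\hat\rho\in C$; by convexity every closed simplex of $\tau'$ spanned by the barycenters of a chain in $\tilde\Sigma$ then lies inside $C$.

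For the other containment, I would pick $p\in C$, write $p=\sum_i\lambda_i\hat\sigma_i$ with all $\lambda_i>0$ (dropping zero terms), and note that the vertex coordinates are $t_v=\sum_{i:v\in\sigma_i}\lambda_i/|\sigma_i|$. The unique open simplex of $\tau'$ containing $p$ corresponds to the chain $\rho^1\subsetneq\cdots\subsetneq\rho^l$, where $\rho^j$ is the set of vertices attaining one of the top $j$ distinct positive values of $t_v$, and it suffices to check that each $\rho^j\in\tilde\Sigma$. The laminar hypothesis implies that $I_v=\{i:v\in\sigma_i\}$ is a chain equal to $\{i:\sigma_i\supseteq\sigma(v)\}$, where $\sigma(v)$ is the unique minimal element of $\Sigma$ containing $v$; grouping vertices by $\sigma(v)$ partitions the support of $p$ into the classes $E_i=\sigma_i\setminus\bigcup_{j:\sigma_j\subsetneq\sigma_i}\sigma_j$.

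The heart of the argument is a strict monotonicity: whenever $\sigma_{i'}\subsetneq\sigma_i$, $v\in E_{i'}$ and $w\in E_i$, one has $I_w\subsetneq I_v$ with $i'\in I_v\setminus I_w$, and therefore $t_v>t_w$ (strictly, since $\lambda_{i'}>0$). This forces $J_j=\{i:E_i\subseteq\rho^j\}$ to be downward-closed in the $\Sigma$-poset, so $\rho^j=\bigcup_{i\in J_j}\sigma_i=\bigcup_{i\in\max J_j}\sigma_i$; the maximal elements form an antichain in the laminar family $\Sigma$ and are therefore pairwise disjoint, placing $\rho^j$ in $\tilde\Sigma$. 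I expect this strict-monotonicity step to be the main obstacle, since it is where the strict inclusion $\sigma_{i'}\subsetneq\sigma_i$ and the positivity $\lambda_{i'}>0$ combine to align the level sets of $t_v$ with the poset structure of $\Sigma$. Given it, the simplex of $\tau'$ containing $p$ has all its vertices in $\{\hat\rho:\rho\in\tilde\Sigma\}\subseteq C$, and the lemma follows.
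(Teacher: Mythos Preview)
Your approach is correct and genuinely different from the paper's. The paper argues by induction on the number of pairs $(i,j)$ with $\sigma_i\cap\sigma_j=\emptyset$: if such a pair exists, one chooses $i,j$ with $\sigma=\sigma_i\cup\sigma_j$ of maximal cardinality, notes that $\hat\sigma$ lies on the segment $[\hat\sigma_i,\hat\sigma_j]$, and splits the convex hull as the union of two hulls in which one of $\sigma_i,\sigma_j$ has been replaced by $\sigma$; the bulk of the work is checking that $\sigma$ is again comparable-or-disjoint with every remaining $\sigma_l$. You instead identify the subcomplex explicitly as the order complex of $\tilde\Sigma$ and verify both inclusions by a direct barycentric-coordinate computation. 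Your description of the vertex set is exactly what the paper extracts \emph{a posteriori} from its inductive proof in Remark~\ref{kmasuno}; building it in from the start makes that remark's $(k+1)!$ simplex count immediate.

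One small point to tighten. With $J_j=\{i:E_i\subseteq\rho^j\}$ as written, every index $i$ with $E_i=\emptyset$ lies in $J_j$ vacuously, and then $J_j$ need not be downward closed and $\bigcup_{i\in J_j}\sigma_i$ can properly overshoot $\rho^j$. For instance take $\sigma_0=\{a\}$, $\sigma_1=\{b\}$, $\sigma_2=\{a,b\}$ with $\lambda_0>\lambda_1>0$: then $E_2=\emptyset$, so $2\in J_1$, yet $\sigma_2\not\subseteq\rho^1=\{a\}$, and $1\notin J_1$ even though $\sigma_1\subsetneq\sigma_2$. The fix is painless: set $J_j=\{i:\sigma_i\subseteq\rho^j\}$ instead. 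Your monotonicity step already shows that whenever $E_i\neq\emptyset$ and $E_i\subseteq\rho^j$ one in fact has $\sigma_i\subseteq\rho^j$, so $\rho^j$ is still the union of the $\sigma_i$ over this (now manifestly downward-closed) $J_j$, and the rest of your argument goes through unchanged.
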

\begin{proof}
We proceed by induction in the number of pairs $i,j$ satisfying (3). If (1) or (2) holds for every $i,j$ then the convex hull $S$ of $ \{\hat{\sigma}_0, \hat{\sigma}_1, \ldots , \hat{\sigma}_k\}$ is a simplex of $K'$. Otherwise take $i,j$ such that $\sigma= \sigma _i \cup \sigma _j$ has maximum cardinality among all the pairs satisfying (3). Since $\hat{\sigma}$ is a convex combination of $\hat{\sigma}_i$ and $\hat{\sigma}_j$, by induction it suffices to prove that $\{ \sigma, \sigma_0, \sigma_1, \ldots , \sigma_{i-1}, \sigma_{i+1}, \ldots, \sigma_k\}$ and $\{ \sigma, \sigma_0, \sigma_1, \ldots , \sigma_{j-1}, \sigma_{j+1}, \ldots, \sigma_k\}$ satisfy the hypothesis of the lemma. Let $0\le l \le k$, $i\neq l \neq j$. We have to verify that $\sigma$ and $\sigma _l$ are comparable or disjoint. If $(i,l)$ and $(j,l)$ satisfy (3), then $\sigma$ and $\sigma _l$ are disjoint. Suppose $(i,l)$ satisfies (3) and $\sigma _j$ is comparable with $\sigma _l$. By the choice of $i$ and $j$, $\sigma _j$ cannot be a proper 
face of $\sigma _l$. Then $\sigma _l \subseteq \sigma _j \subseteq \sigma$ so $\sigma _l$ and $\sigma$ are comparable. By symmetry it only remains to analyze the case that $\sigma _l$ is comparable with both $\sigma _i$ and $\sigma _j$. If $\sigma _l$ is a face of any of them, then it is a face of $\sigma$. If $\sigma _i$ and $\sigma _j$ are faces of $\sigma _l$, then so is $\sigma$. 
\end{proof}

\begin{obs} \label{kmasuno}
In the conditions of Lemma \ref{subcomplejo}, note that if the convex hull $S$ of $ \{\hat{\sigma}_0, \hat{\sigma}_1, \ldots , \hat{\sigma}_k\}$ is a $k$-dimensional subcomplex of $\tau '$, then it contains at most $(k+1)!$ many $k$-simplices of $\tau '$. Moreover, if the equality holds then all the pairs $i,j$ satisfy condition (3). The proof of Lemma \ref{subcomplejo} shows that the vertices of $S\le \tau'$ are barycenters of unions of simplices $\sigma _i$. Therefore, the $k$-simplices of $\tau '$ contained in $S$ are of the form $\{\hat{\tau} _0, \hat{\tau} _1, \ldots , \hat{\tau} _k\}$ where $\tau _l=\bigcup\limits_{i\in A_l} \sigma _i$, $A_l\subseteq [0,k]$, and $\tau _l \subsetneq \tau_{l+1}$. Moreover, we can assume that $A_l\subsetneq A_{l+1}$. Since there are only $(k+1)!$ sequences $\emptyset \neq A_0\subsetneq A_1 \subsetneq \ldots \subsetneq A_k \subseteq [0,k]$, $S$ is decomposed in at most $(k+1)!$ many $k$-simplices. If $\sigma _i \subseteq \sigma _j$ for some $i\neq j$ then half of these sequences do not give a $k$-simplex, so $S$ is decomposed in at most $\frac{(k+1)!}{2}$ many $k$-simplices.
\end{obs}

Let $\varphi : K \to L$ be a simplicial map between finite simplicial complexes. We will work with the following version of the simplicial mapping cylinder $Z_{\varphi}$ of $\varphi$. First we choose a total ordering in the set of vertices of $K$. The vertex set of $Z_{\varphi}$ is the disjoint union of the vertex set of $K$ and of $L$. The simplices of the cylinder are the simplices of $L$ together with sets of the form $\{v_0, v_1, \ldots , v_l, \varphi (v_{l+1} ), \varphi (v_{l+2} ), \ldots , \varphi (v_m )\}$ where $\{v_0, v_1, \ldots , v_m\}$ is a simplex of $K$ and $v_i \le v_{i+1}$ for every $i$ ($v_l$ could be equal to $v_{l+1}$).

There is a simplicial retraction $p:Z_{\varphi} \to L$ of the canonical inclusion $j: L\to Z_{\varphi}$ defined by $p(v)=\varphi (v)$ if $v\in K$. Therefore $pi=\varphi$ where $i$ denotes the canonical inclusion of $K$ into the cylinder. The composition $jp$ lies in the same contiguity class as the identity $1_{Z_{\varphi}}$, so $p_* : C_*(Z_{\varphi}) \to C_*(L)$ is a homotopy equivalence \cite[p.151]{Spa}.

Suppose $K$ is a subdivision of a complex $L$ and that $\psi : K \to L$ is a simplicial approximation to the identity. In other words, $\psi$ is a vertex map which maps each vertex $v\in K$ to any vertex $w\in L$ of the unique open simplex of $L$ containing $v$. In this case $\psi _*:C_*(K) \to C_*(L) $ is a homotopy equivalence. Since $p_*:C_*(Z_{\psi}) \to C_*(L)$ is a homotopy equivalence and $\psi =pi$, $i_* : C_*(K) \to C_*(Z_{\psi})$ is a homotopy equivalence. Since $C_*(K)$ is a subcomplex of $C_*(Z_{\psi})$, it is known that there exists a retraction $r: C_*(Z_{\psi}) \to C_* (K)$. However, we need to control the norm $\|r_k \|$ of each $r_k : C_k(Z_{\psi}) \to C_k (K)$. We will prove that for barycentric subdivisions $K=L'$ there is a retraction $r$ such that $\|r_k\|\le (k+1)!$. It is not true that this inequality holds for any retraction $r$.

\begin{lema} \label{ellema}
Let $K$ be a finite simplicial complex. Then there exists an ordering of the vertices of $K'$, a simplicial approximation to the identity $\psi :K' \to K$ and a retraction $r:C_*(Z_{\psi})\to C_*(K')$ satisfying the following
\begin{enumerate}
 \item If $S$ is a $k$-simplex of $Z_{\psi}$, then $\|r_k(S)\|\le (k+1)!$.
 \item If $S$ is a $k$-simplex of $Z_{\psi}$ with $k\ge 1$ such that $\|r_k(S)\|= (k+1)!$, then $S\in K$.
\end{enumerate}
\end{lema}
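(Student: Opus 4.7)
The plan is to construct $r$ by ``projecting'' each simplex $S$ of $Z_\psi$ to a convex subpolytope $T(S)\subseteq|K|$, realized as a subcomplex of the first barycentric subdivision of a simplex of $K$, and to set $r(S)$ equal to (a sign times) its fundamental chain. Lemma \ref{subcomplejo} is what guarantees that each $T(S)$ really is a subcomplex of $K'$, and Remark \ref{kmasuno} then provides both the bound $(k+1)!$ on the number of $k$-simplices of $T(S)$ and the strict improvement to $(k+1)!/2$ whenever some pair in the defining family of faces is comparable.

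First I would fix a total order on $V(K)$, define $\psi(\hat\sigma):=\max\sigma$, and order $V(K')$ by declaring $\hat\sigma<\hat\tau$ whenever $\dim\sigma>\dim\tau$ (tie-breaking arbitrarily among equal dimensions). Under this \emph{reversed} ordering, a mixed simplex $\{v_0,\ldots,v_l,\psi(v_{l+1}),\ldots,\psi(v_m)\}$ coming from a chain $\sigma_0\subsetneq\cdots\subsetneq\sigma_m$ of $K$ has top vertices equal to the barycenters $\hat\sigma_m,\ldots,\hat\sigma_{m-l}$ of the \emph{highest}-dimensional faces. Writing $u_j:=\psi(\hat\sigma_j)$ for $j\le m-l-1$, the associated family of faces of $\sigma_m$,
\[
\sigma_m,\ \sigma_{m-1},\ \ldots,\ \sigma_{m-l},\ \{u_{m-l-1}\},\ \ldots,\ \{u_0\},
\]
is pairwise comparable-or-disjoint (the $\sigma$'s form a chain, each singleton satisfies $\{u_j\}\subseteq\sigma_j\subsetneq\sigma_i$ for every top index $i>j$, and two distinct singletons are equal or disjoint), and at least one pair is comparable whenever $S\notin K$: take $\sigma_{m-l}\subsetneq\sigma_{m-l+1}$ if $l\ge 1$, and $\{u_0\}\subsetneq\sigma_m$ if $l=0$. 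For $S\in K$ I would instead use the singleton family $\{w_0\},\ldots,\{w_k\}$ of its vertices, which is pairwise disjoint; Lemma \ref{subcomplejo} gives $T(S)=S'$, with the full $(k+1)!$ many $k$-simplices.

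With $T(S)$ defined for every $S$, deleting a vertex of $S$ removes one face from its associated family, so $T(S')\subseteq T(S)$ for every face $S'$. I would then construct $r$ by induction on dimension: set $r(\hat\sigma)=\hat\sigma$ and $r(w)=\hat{\{w\}}$ on vertices, and in dimension $k\ge 1$ observe that $r(\partial S)$ is a $(k-1)$-cycle supported in $C_{k-1}(T(S))$; since $T(S)$ is a convex polytope and hence contractible, $H_k(T(S))=0$ and there is a \emph{unique} $k$-chain $c\in C_k(T(S))$ with $\partial c=r(\partial S)$. Setting $r(S):=c$, uniqueness makes $r$ a chain map and forces $r|_{C_*(K')}=\mathrm{id}$ (for $S\in K'$ one has $T(S)=S$, and $\partial S$ is its own unique preimage).

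The main obstacle, and where actual norm control rather than mere existence of a retraction enters, is identifying this uniquely-determined $c$ with the fundamental chain $\pm[T(S)]$, so that $\|r_k(S)\|$ equals the number of $k$-simplices in $T(S)$. I would argue this by viewing $r$ as the chain-level shadow of the ``project to the top'' map $\pi\colon|Z_\psi|\to|K|$ that sends each simplex to the convex hull of its vertex locations in $|K|$: when $\dim T(S)=k$, $\pi$ restricts to a piecewise-linear surjection of degree $\pm 1$ from $|S|$ onto the $k$-polytope $T(S)$, giving $\pi_*([S])=\pm[T(S)]$, and uniqueness of $c$ then identifies $r(S)$ with this fundamental chain. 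Granting this, Remark \ref{kmasuno} yields $\|r_k(S)\|\le(k+1)!$, proving (1); and for $S\notin K$ the comparable pair noted above upgrades the bound to $(k+1)!/2<(k+1)!$ for $k\ge 1$, proving (2).
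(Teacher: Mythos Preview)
Your approach is essentially the same as the paper's: the same reverse-dimension ordering on $V(K')$, the same carrier $T(S)=\Phi(S)$ obtained via Lemma~\ref{subcomplejo}, the same inductive/acyclic-carrier construction of $r$, and the same appeal to Remark~\ref{kmasuno} for both the bound and the strict inequality. The only difference is in phrasing the key identification $r_k(S)=\pm[T(S)]$: the paper observes that, on each simplex $S$ with $\dim\Phi(S)=k$, the restricted carrier is the usual subdivision carrier, so by uniqueness $r|_{C_*(S)}$ coincides with the subdivision operator $\lambda$; your ``degree~$\pm1$ projection $\pi$'' says the same thing geometrically, but the paper's formulation via $\lambda$ is cleaner and avoids having to justify that the PL map $\pi$ induces a well-defined simplicial chain map.
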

\begin{proof}
Order the vertices $\hat{\sigma}$ of $K'$ in such a way that $\hat{\sigma}<\hat{\tau}$ implies $\dime (\sigma) \ge \dime (\tau)$. Let $\psi :K' \to K$ be any approximation to the identity. In other words, if $\hat{\sigma}$ is a vertex of $K'$, then $\psi (\hat{\sigma})\in \sigma$.
A $k$-simplex of $Z_{\psi}$ is of the form $S=\{ \hat{\sigma}_0, \hat{\sigma}_1, \ldots , \hat{\sigma}_l, \psi (\hat{\sigma}_{l+1}), \psi (\hat{\sigma}_{l+2}), \ldots ,$ $\psi (\hat{\sigma}_m) \}$ where $l\ge -1$, $m\ge l$ and $\sigma_{i+1} \subseteq \sigma _{i}$ for all $0\le i\le m-1$ (the simplices of $K$ are included in these). We can consider $S$ as a set of vertices of $K'$ identifying $v_i=\psi (\hat{\sigma}_i)$ with the barycenter of $\{v_i\}$. The hypothesis of Lemma \ref{subcomplejo} is satisfied. For any $0\le i,j \le l$, $\sigma _i$ and $\sigma _j$ are comparable; if $l+1\le i,j\le m$, $\{v_i\}$ and $\{v_j\}$ are disjoint or equal; if $0\le i\le l$ and $l+1\le j\le m$, then $\sigma _i \supseteq \sigma _j \supseteq \{v_j\}$. Thus, by Lemma \ref{subcomplejo}, the convex hull of $S$ is a subcomplex $\Phi (S)$ of $K'$.
The application $\Phi$ defines an acyclic carrier from $Z_{\psi}$ to $K'$. Let $r:C_*(Z_{\psi})\to C_*(K')$ be a chain map carried by $\Phi$. Note that $r$ is univocally determined by $\Phi$ since for a $k$-simplex $S\in Z_{\psi}$, $\Phi(S)$ is $j$-dimensional with $j\le k$. Thus, any homotopy $F:C_*(Z_{\psi})\to C_{*+1}(K')$ carried by $\Phi$, given by the Acyclic carrier theorem, must be trivial.

If $S\in Z_{\psi}$ is a $k$-simplex such that $\dim \Phi(S)<k$, then $r(S)\in C_k(\Phi(S))=0$ is trivial. Otherwise $\dim \Phi (S)=k$ and then $\Phi(S)$ is a subdivision of $S$ (considered as a set of $k+1$ affinely independent vertices of $K'$). One has then the subdivision operator $\lambda :C_*(S)\to C_*(\Phi(S))$. Since for each $j$-face $\widetilde{S}$ of $S$, $\Phi(\widetilde{S})$ is a $j$-dimensional subcomplex, the acyclic carrier $\Phi$ when restricted to $C_*(S)$ is the usual subdivision carrier $\Phi(\widetilde{S})=K'(\widetilde{S})$. Thus $\lambda, r|_{C_*(S)}:C_*(S)\to C_*(\Phi(S))$ are carried by the same acyclic carrier $\Phi$ and by the same argument as before, they coincide. Hence $\|r_k(S)\|= \|\lambda _k(S)\|$ is the number of $k$-simplices in $\Phi(S)$ which is at most $(k+1)!$ by Remark \ref{kmasuno}.

Cearly $r:C_*(Z_{\psi})\to C_*(K)$ is a retraction since for $S\in K'$ we have $\Phi(S)=S$ and $\lambda(S)=S$.

Finally, suppose $S=\{ \hat{\sigma}_0, \hat{\sigma}_1, \ldots , \hat{\sigma}_l, \psi (\hat{\sigma}_{l+1}), \psi (\hat{\sigma}_{l+2}), \ldots , \psi (\hat{\sigma}_m) \}$ is a $k$-simplex of $Z_{\psi}$ with $k\ge 1$. If $l\ge 0$, $\sigma _0$ is comparable with each $\sigma_j$ and each $\{v _j\}$. By Remark \ref{kmasuno}, $S$ is subdivided in less than $(k+1)!$ $k$-simplices of $K'$ so $\|r_k (S)\|< (k+1)!$. This proves the second assertion of the lemma. 
\end{proof}

\begin{obs} \label{realizar}
It is well-known that every singular $k$-homology class $\alpha$ of a space $X$ can be realized by a disjoint union $\sqcup M_i$ of closed $k$-dimensional oriented pseudomanifolds, meaning that there is a continuous map $f: \sqcup M_i \to X$ such that $\sum f_*([M_i])=\alpha$ (see \cite[p.108]{Hat} for example). Then for every simplicial homology class $\alpha \in H_k(K)$ of a simplicial complex $K$ there exists a simplicial map $\varphi :\sqcup M_i \to K$ from a disjoint union of closed oriented pseudomanifolds such that $\sum \varphi _* ([M_i])=\alpha$.  
\end{obs} 

%

\begin{teo} \label{fixedsimplex}
Let $K$ be a finite simplicial complex which is simply connected or, more generally, such that $H_1(K)=0$. Then there exists a finite simplicial complex $L$ homotopy equivalent to $K$ with the fixed simplex property. 
\end{teo}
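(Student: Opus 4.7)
The plan is to make precise the sketch from the introduction. Using that $K$ is connected with $H_1(K)=0$, fix for each $k\ge 2$ a finite family of integer classes $\alpha_{k,l}\in H_k(K)$ whose images in $H_k(K;\qq)$ form a basis. By Remark \ref{realizar}, realize each $\alpha_{k,l}$ as $(\varphi_{k,l})_*([M_{k,l}])$ for a simplicial map $\varphi_{k,l}:M_{k,l}\to K$ from a closed $k$-dimensional oriented pseudomanifold. By Proposition \ref{subdivasim}, after replacing each $M_{k,l}$ by a suitable subdivision, I may assume that every iterated barycentric subdivision $M_{k,l}^j$ is asymmetric, fixing a distinguished vertex $v_{k,l}$.

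Next, I would choose integers $0\le r<j$ (how large they need to be will be dictated by the estimate below) and construct $L$ by attaching to $K^j$ the simplicial mapping cylinders $Z_{\varphi_{k,l}^j}$ of the subdivided maps $\varphi_{k,l}^j:M_{k,l}^j\to K^j$, together with the simplicial mapping cylinders $Z_{\psi_{k,l}}$ of simplicial approximations to the identity $\psi_{k,l}:M_{k,l}^j\to M_{k,l}^r$. Each such cylinder deformation-retracts onto its target, so $L$ is homotopy equivalent to $K$, and the subcomplexes $M_{k,l}^r\subseteq L$ represent the classes $\alpha_{k,l}$ under the induced isomorphism $H_*(L;\qq)\cong H_*(K;\qq)$.

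The technical heart of the argument is the following singular estimate: for $j$ and $r$ chosen appropriately, any cycle $c\in C_k(L)$ with $\|c\|\le \|[M_{k,l}^r]\|$ whose $(\alpha_{k,l}\otimes 1_{\qq})$-coordinate in $H_k(L;\qq)$ is nonzero must equal, up to sign, the fundamental cycle of $M_{k,l}^r$. To prove this I would push $c$ through the two layers of mapping-cylinder retractions to obtain representatives supported in $C_k(M_{k,l}^r)$ and, separately, in $C_k(K^j)$; the norm bound $\|r_k\|\le (k+1)!$ from Lemma \ref{ellema} makes the telescoping quantitative. The governing inequality balances $\|[M_{k,l}^r]\|=(k+1)!^{r}\|[M_{k,l}]\|$ against the norm $(k+1)!^{j-r}$ of the subdivision operator from $M_{k,l}^r$ into $M_{k,l}^j$ multiplied by fixed factors arising from $\varphi_{k,l\,*}$ and the retractions. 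Choosing $r$ large enough while controlling $j-r$ then separates the fundamental cycle $[M_{k,l}^r]$ from every other cycle of comparable norm, simultaneously across all indices $(k,l)$.

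Given this estimate the conclusion is short. Let $f:L\to L$ be a simplicial endomorphism. If for every $k\ge 2$ and every $l$ the $(\alpha_{k,l},\alpha_{k,l})$-diagonal entry of $f_*:H_k(L;\qq)\to H_k(L;\qq)$ in this basis vanishes, then since $H_0(L;\qq)=\qq$, $H_1(L;\qq)=0$, and all higher diagonals vanish, the Lefschetz number is $\Lambda(f)=1$, and the Lefschetz fixed point theorem yields a fixed point of $|L|$, hence a fixed simplex. Otherwise some diagonal entry at $\alpha_{k,l}$ is nonzero with $k\ge 2$; the chain obtained by applying $f_*$ to the fundamental cycle of $M_{k,l}^r$ has norm at most $\|[M_{k,l}^r]\|$ (since simplicial chain maps have norm $\le 1$) and its class has nonzero $\alpha_{k,l}$-coordinate, so by the singular estimate it equals $\pm [M_{k,l}^r]$. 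This forces $f$ to restrict to a simplicial automorphism of $M_{k,l}^r$, which by asymmetry fixes $v_{k,l}$. The main obstacle is clearly the singular estimate itself: it requires a careful bookkeeping of norms across the iterated mapping-cylinder and subdivision constructions to ensure that no unwanted cycle of norm $\le \|[M_{k,l}^r]\|$ represents the same class as $[M_{k,l}^r]$.
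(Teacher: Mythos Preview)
Your plan follows the introduction's sketch, but the paper's proof does not establish the global ``singular estimate'' you aim for, and your simplified $L$ is inadequate for it. Two structural ingredients are missing. First, Lemma~\ref{ellema} concerns a \emph{single} barycentric subdivision: it gives a retraction $C_*(Z_\psi)\to C_*(K')$ of norm $\le(k+1)!$ only for $\psi:K'\to K$. Your one cylinder for $\psi_{k,l}:M_{k,l}^j\to M_{k,l}^r$ spanning $j-r$ subdivisions has no such retraction available; the paper therefore builds the corresponding piece $C_{k,l}^c$ as a chain of $s_n-s_{k-1}$ one-step cylinders and applies Lemma~\ref{ellema} step by step, using part~(2) at the bottom step to force each simplex of $pf(M_{k,l}^{s_{k-1}})$ into $M_{k,l}^{s_{k-1}}$. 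Second, and more decisively, the paper inserts between $Z_{\varphi_{k,l}^{s_n}}$ and $C_{k,l}^c$ a long ``separator'' $C_{k,l}^b$ made of $N$ glued cylinders of the identity on $M_{k,l}^{s_n}$. Its role is geometric: since $f(M_{k,l}^{s_{k-1}})$ is a \emph{connected} subcomplex generated by at most $N$ simplices, it cannot cross any $C_{k',l'}^b$ and must lie entirely in $C^{ab}=\bigcup_{k',l'}(C_{k',l'}^a\cup C_{k',l'}^b)$ or in a single $C_{k',l'}^{bc}$. In the first case there is a \emph{simplicial} (hence norm $\le 1$) retraction onto $K^{s_n}$, and a Lebesgue-number argument built into the choice of $s_n$ shows the image is null-homologous. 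The subdivision levels are also staggered ($s_1<s_2<\cdots<s_n$, with $M_{k,l}$ placed at level $s_{k-1}$) so that the same Lebesgue argument works inside $M_{k',l'}^{s_{k'-1}}$ when $k'>k$; a single pair $(r,j)$ cannot arrange this.

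Your route---a chain-level estimate valid for \emph{all} cycles of norm $\le\|[M_{k,l}^r]\|$---fails for a concrete reason. Any chain-level retraction of your $L$ onto $K^j$ must pass through the $Z_{\psi_{k,l}}$ piece, where (even granting an iterated version of Lemma~\ref{ellema}) the norm blows up by $(k+1)!^{j-r}$; a cycle of norm $\|[M_{k,l}^r]\|$ is then pushed to a cycle of norm up to $\|[M_{k,l}^j]\|$ in $K^j$, which is exactly the norm of the non-null representative $(\varphi_{k,l}^j)_*[M_{k,l}^j]$ of $\alpha_{k,l}$. No contradiction arises, and cycles with support spread between $K^j$ and the far end of $Z_{\psi_{k,l}}$ cannot be ruled out by your bookkeeping. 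The paper avoids this entirely: it never attempts a global estimate, but uses connectivity of $f(M_{k,l}^{s_{k-1}})$ to trap it on one side of $C^b$, then applies the cheap simplicial retraction on the $K$-side and the iterated Lemma~\ref{ellema} retractions only on the $M$-side.
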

\begin{proof}

\textbf{First part: Construction of $L$}

Let $n=\dim (K)$. For each $2\le k\le n$ let $d_k$ be the rank of $H_k(K; \qq)$. Take for each $2\le k\le n$ homology classes $\alpha _{k,1}, \alpha_{k,2}, \ldots , \alpha_{k, d_k}\in H_k(K)$ such that $\{\alpha_{k,1}\otimes 1_{\qq}, \alpha_{k,2} \otimes 1_{\qq}, \ldots , \alpha_{k,d_k}\otimes 1_{\qq}\}$ is a basis of $H_k(K; \qq)$. By Remark \ref{realizar} each $\alpha _{k,l}$ can be realized by a disjoint union of closed oriented pseudomanifolds. Moreover, by changing the $\alpha _{k,l}$'s if needed we can assume that each of them is realized by a single pseudomanifold. For each $k\ge 2$ and $1\le l\le d_k$ let $M_{k,l}$ be a $k$-dimensional oriented pseudomanifold and let $\varphi _{k,l}: M_{k,l} \to K$ be a simplicial map such that $(\varphi _{k,l})_*([M_{k,l}])= \alpha _{k,l}$. By Proposition \ref{subdivasim} we can assume that $M_{k,l}$ is asymmetric for each $k,l$ as well as all their iterated barycentric subdivisions.

We define an increasing sequence $s_1,s_2,s_3,\ldots ,s_n$ of non-negative integers as follows. Let $s_1=0$. Let $N_{2,l}=\| [M_{2,l}] \|$ be the number of $2$-simplices in $M_{2,l}$ for 
each $1\le l\le d_2$ and let $N_2=\max\limits_l N_{2,l} $. If $P$ is any finite simplicial complex, the cover 
$\mathcal{U}$ of $P$ given by the open stars $\st _P(v)$ of the vertices of 
$P$ has a Lebesgue number $\delta >0$. Therefore, there exists a positive  
integer $s_2$ such that for each $s\ge s_2$, every connected subcomplex of 
$P^{s}$ generated by at most $N_2$ many simplices is contained in an element of 
$\mathcal{U}$, and in particular in a contractible subcomplex of $P$. We take $s_2$ in such a way that the assertion above holds for $P$ when $P$ is any $M_{k,l}$ with $k\ge 3$ and $1\le l\le d_k$.  

Now let $N_{3,l}=\|[M_{3,l}^{s_2}]\|$ for each $1\le l\le d_3$ and let $N_3=\max\limits_l N_{3,l}$. Take $s_3\ge s_2$ 
such that for each $s\ge s_3$, $k\ge 4$ and $1\le l\le d_k$, every connected subcomplex of $M_{k,l}^s$ generated by at most $N_3$ many simplices is contained in a contractible subcomplex. 

In general, suppose $s_2,s_3, \ldots ,s_m$ are defined, with $m\le n-2$. Then define $N_{m+1,l}=\|[M_{m+1,l}^{s_m}]\|$ for each $1\le l\le d_{m+1}$ and let $N_{m+1}=\max\limits_l N_{m+1,l}$. Take $s_{m+1}\ge s_{m}$ such that for each $s\ge s_{m+1}$, every connected subcomplex of $M_{k,l}^s$ generated by at most $N_{m+1}$ many simplices is contained in a contractible subcomplex for each $k\ge m+2$ and $1\le l\le d_k$.

Finally define $N_{n,l}=\|[M_{n,l}^{s_{n-1}}]\|$, $N_n=\max\limits_l N_{n,l}$, $N=\max\limits_{2\le k\le n} N_k$ and take $s_n\ge s_{n-1}$ such that for each $s\ge s_n$, any connected subcomplex of $K^s$ generated by at most $N$ simplices is contained in a contractible subcomplex.

We now define for each $k\ge 2$ and $1\le l\le d_k$ a cylinder $C_{k,l}$ which will be attached to $K^{s_n}$. Each $C_{k,l}$ consists of three parts. The first one is $C_{k,l}^a=Z_{\varphi _{k,l}^{s_n}}$, the cylinder of $\varphi _{k,l}^{s_n}: M_{k,l}^{s_n} \to K^{s_n}$ (see Figure \ref{figura}). The second part $C_{k,l}^b$ is constructed as follows. We glue $N$ cylinders $Z_{1_{M_{k,l}^{s_n}}}$ of the identity $1_{M_{k,l}^{s_n}}: M_{k,l}^{s_n} \to M_{k,l}^{s_n}$, the second base of one with the first base of the following, to build a long cylinder $C_{k,l}^b$ with both bases equal to $M_{k,l}^{s_n}$. The last part $C_{k,l}^{c}$ is the union of $s_n-s_{k-1}$ mapping cylinders. For each $s_{k-1} <m\le s_n$ there is a simplicial approximation to the identity $\psi _{k,l,m}:M_{k,l}^m\to M_{k,l}^{m-1}$ and a retraction $R_m=R_{k,l,m}:C_*(Z_{\psi_{k,l,m}})\to C_*(M_{k,l}^m)$ satisfying properties (1) and (2) in the statement of Lemma \ref{ellema}. When we glue the cylinders $Z_{\psi _{k,l,m}}$ identifying a base of one with a base of the following we obtain a cylinder $C_{k,l}^{c}$ with one base equal to $M_{k,l}^{s_n}$ and the other equal to $M_{k,l}^{s_{k-1}}$. Finally we glue $C_{k,l}^a$ with one extreme of $C_{k,l}^b$ and $C_{k,l}^c$ with the other extreme of $C_{k,l}^b$. This is $C_{k,l}$.

\begin{figure}[h] 
\begin{center}
\includegraphics[scale=0.14]{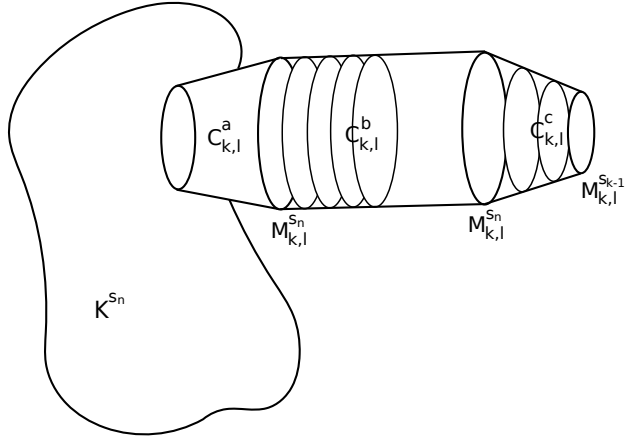}
\caption{The cylinder $C_{k,l}$, union of $C_{k,l}^a$, $C_{k,l}^b$ and $C_{k,l}^c$.}\label{figura}
\end{center}
\end{figure}

Let $L$ be the union of all the cylinders $C_{k,l}$ for $k\ge 2$ and $1\le l\le d_k$, all intersecting in $K^{s_n}$. Each $C_{k,l}^c$ deformation retracts to $M_{k,l}^{s_n}$, $C_{k,l}^b$ deformation retracts to $M_{k,l}^{s_n}$ and $C_{k,l}^a$ deformation retracts to $K^{s_n}$. Therefore $L$ is homotopy equivalent to $K$. We will show that $L$ has the fixed simplex property.

\textbf{Second part: $L$ has the fixed simplex property}

Let $i_{k,l}:M_{k,l}^{s_{k-1}} \hookrightarrow L$ be the inclusion map in the free extreme of $C_{k,l}^c$. Note that $(i_{k,l})_*[M_{k,l}^{s_{k-1}}]=i_*(\varphi _{k,l}^{s_n})_*[M_{k,l}^{s_n}]=i_* \lambda _* (\alpha _{k,l})$, where $i:K^{s_n}\hookrightarrow L$ is the inclusion. Hence, $\mathcal{B}_k=\{ (i_{k,l})_*([M_{k,l}^{s_{k-1}}])\otimes 1_{\qq} \}_l $ is a basis of $H_k(L; \qq)$.

Let $f:L\to L$ be a simplicial map. We study for each $k$ the matrix of $f_*:H_k(L; \qq) \to H_k(L; \qq)$ in the basis $\mathcal{B}_k$. Since $M_{k,l}^{s_{k-1}}$ has $N_{k,l}\le N_k$ many $k$-simplices, $f(M_{k,l}^{s_{k-1}})$ lies in a connected subcomplex of $L$ generated by at most $N_k$ simplices. Since each $C_{k',l'}^b$ is constructed gluing $N\ge N_k$ cylinders then one of the following holds: 

(1) $f(M_{k,l}^{s_{k-1}}) \subseteq \bigcup\limits_{k',l'} (C_{k',l'}^a \cup C_{k',l'}^b)$ or

(2) $f(M_{k,l}^{s_{k-1}}) \subseteq C_{k',l'}^b \cup C_{k',l'}^c$ for some $k',l'$.

In the first case, call $C^{ab}=\bigcup\limits_{k',l'} (C_{k',l'}^a \cup C_{k',l'}^b)$. Just as $L$, the complex $C^{ab}$ deformation retracts to $K^{s_n}$, but in contrast to $L$, for $C^{ab}$ the retraction $r^{ab}:C^{ab} \to K^{s_n}$ may be taken simplicial. Since we are assuming $fi_{k,l}: M_{k,l}^{s_{k-1}}\to C^{ab}$, $r^{ab}fi_{k,l}(M_{k,l}^{s_{k-1}})$ is contained in a connected subcomplex of $K^{s_n}$ generated by $N_{k,l}\le N_k$ simplices. By the choice of $s_n$, this connected subcomplex lies in a contractible subcomplex of $K^{s_n}$ and then $r^{ab}_*(fi_{k,l})_*[M_{k,l}^{s_{k-1}}]=0\in H_k(K^{s_n})$. Since $r^{ab}$ is a homotopy equivalence $(fi_{k,l})_*[M_{k,l}^{s_{k-1}}]=0\in H_k(C^{ab})$ and then $f_*(i_{k,l})_*[M_{k,l}^{s_{k-1}}]=0\in H_k(L)$. In this case the $l$-th column of the matrix of $f_*$ is zero.

Assume then that (2) holds. Call $C_{k',l'}^{bc}=C_{k',l'}^b \cup C_{k',l'}^c$ and suppose that $fi_{k,l}(M_{k,l}^{s_{k-1}})\subseteq C_{k',l'}^{bc}$. The complex $C_{k',l'}^{bc}$ deformation retracts to $M_{k',l'}^{s_{k'-1}}$ by a simplicial retraction $r=r_{k',l'}^{bc}: C_{k',l'}^{bc}\to M_{k',l'}^{s_{k'-1}}$. Since $rfi_{k,l}(M_{k,l}^{s_{k-1}})$ is contained in a connected subcomplex of $M_{k',l'}^{s_{k'-1}}$ generated by $N_{k,l}\le N_k$ simplices, $r_*(fi_{k,l})_*[M_{k,l}^{s_{k-1}}]=0\in H_k(M_{k',l'}^{s_{k'-1}})$ if $k'>k$, by the choices of $s_{k}$ and $s_{k'-1}$. If $k'<k$, then $H_k(M_{k',l'}^{s_{k'-1}})=0$ since $\dim M_{k',l'}=k'$. Therefore in this case we also have $r_*(fi_{k,l})_*[M_{k,l}^{s_{k-1}}]=0\in H_k(M_{k',l'}^{s_{k'-1}})$. Since $r$ is a homotopy equivalence we conclude that if $k'\neq k$, then $(fi_{k,l})_*[M_{k,l}^{s_{k-1}}]=0\in H_k(C_{k',l'}^{bc})$ and therefore $f_*(i_{k,l})_*[M_{k,l}^{s_{k-1}}]=0\in H_k(L)$. Hence the $l$-th column of $f_*$ is also zero. It remains to analyze the case $k'=k$. In that case $r_*(fi_{k,l})_*[M_{k,l}^{s_{k-1}}]\in H_k(M_{k,l'}^{s_{k-1}})$ is an integer multiple of the fundamental class $[M_{k,l'}^{s_{k-1}}]$ and then $f_*(i_{k,l})_*[M_{k,l}^{s_{k-1}}]=(i_{k,l'})_*r_*(fi_{k,l})_*[M_{k,l}^{s_{k-1}}]$ is an integer multiple of $(i_{k,l'})_*[M_{k,l'}^{s_{k-1}}]$. If $l'\neq l$, the $l$-th column of $f_*$ has a zero in the $l$-th entry. The last and most important case is $l'=l$. 

If for each $k,l$ the case (1) occurs or the case (2) for $(k',l')\neq (k,l)$, then the trace of the matrix of $f_*$ in each positive degree is zero and by the Lefschetz fixed point theorem, $f$ must fix a simplex. We can assume then that there exists one pair $k,l$ such that (2) holds for $(k',l')=(k,l)$ and that $ (fi_{k,l})_*[M_{k,l}^{s_{k-1}}]\in H_k(C_{k,l}^{bc}) $ is non-trivial.

The simplicial projection $C_{k,l}^b \to M_{k,l}^{s_n}$ of the cylinder into the extreme in contact with $C_{k,l}^{c}$ extends to a simplicial retraction $p:C_{k,l}^{bc} \to C_{k,l}^c$. On the other hand Lemma \ref{ellema} provides retractions $R_m:C_*(Z_{\psi _{k,l,m}}) \to C_*(M_{k,l}^m)$ for each $s_{k-1}<m\le s_n$. Each of them extends to a retraction 

$$\widetilde{R}_m : C_*(\bigcup\limits_{q=m}^{s_n} Z_{\psi _{k,l,q}} ) \to C_*(\bigcup\limits_{q=m+1}^{s_n} Z_{\psi _{k,l,q}} )  $$

When $m=s_n$, $\widetilde{R}_m$ is just another notation for $R_{s_n}$.

By Lemma \ref{ellema}, the norm of the map $R_m$ in degree $k$ is  $\|R_m\| \le (k+1)!$ for each $m$, so $\|\widetilde{R}_m\|\le (k+1)!$.

Let $c\in Z_k(M_{k,l}^{s_{k-1}})$ be the fundamental cycle of $M_{k,l}^{s_{k-1}}$. Then $\|c\|$ is the number of $k$-simplices of $M_{k,l}^{s_{k-1}}$ and $p_*(fi_{k,l})_*(c) \in Z_k(C_{k,l}^c)$ is a $k$-cycle in $C_{k,l}^c$. Thus $$ \widetilde{c}=\widetilde{R}_{s_n} \widetilde{R}_{s_n-1} \ldots \widetilde{R}_{s_{k-1}+2} \widetilde{R}_{s_{k-1}+1} p_*(fi_{k,l})_*(c) \in Z_k(M_{k,l}^{s_n})  $$ is a cycle with norm at most $((k+1)!)^{s_n-s_{k-1}}\|c\|$. But this number is exactly the number of $k$-simplices in the pseudomanifold $M_{k,l}^{s_n}$. If $\|\widetilde{c}\|< ((k+1)!)^{s_n-s_{k-1}}\|c\|$, the cycle $\widetilde{c}$ is carried by a proper subcomplex of $M_{k,l}^{s_n}$ and then it is trivial in homology. Since each $\widetilde{R}_m$ induces an isomorphism in homology, $f_*(i_{k,l})_*[M_{k,l}^{s_{k-1}}]=0\in H_k(L)$. This contradicts the assumption. Therefore, $\|\widetilde{c}\|=((k+1)!)^{s_n-s_{k-1}}\|c\|$. 

Since the equality holds, we have in particular $\| \widetilde{R}_{s_{k-1}+1} p_* (fi_{k,l})_*(c)\|= (k+1)! \|c\|$. Then for every $k$-simplex $\sigma \in M_{k,l}^{s_{k-1}}$, $S=pfi_{k,l}(\sigma)$ is a $k$-simplex of $C_{k,l}^c$ and $\| \widetilde{R}_{s_{k-1}+1}(S) \|$ $=(k+1)!$. By Lemma \ref{ellema}, $S\in M_{k,l}^{s_{k-1}}$. We conclude then that $pfi_{k,l}(M_{k,l}^{s_{k-1}})\subseteq M_{k,l}^{s_{k-1}}$ and therefore $fi_{k,l}(M_{k,l}^{s_{k-1}})\subseteq M_{k,l}^{s_{k-1}}$. If $fi_{k,l}(M_{k,l}^{s_{k-1}})$ is contained in a proper subcomplex of $M_{k,l}^{s_{k-1}}$, $(fi_{k,l})_*[M_{k,l}^{s_{k-1}}]=0$ and we have a contradiction. Then $f|_{M_{k,l}^{s_{k-1}}}: M_{k,l}^{s_{k-1}}\to M_{k,l}^{s_{k-1}}$ is an automorphism and the asymmetry of $M_{k,l}^{s_{k-1}}$ gives the desired fixed simplex. 
\end{proof}

The poset of simplices of a finite simplicial complex $K$ is denoted by $\x (K)$. Recall that a finite poset $X$ can be regarded as a topological space with finitely many points in which open sets are those subsets $U\subseteq X$ such that any $x\in X$ which is smaller than or equal to an element of $U$ is itself in $U$. This space satisfies the $T_0$ separation axiom and in fact any finite $T_0$-space is a poset in this sense. Order preserving maps correspond to continuous maps and comparable maps are homotopic (\cite{LNM}). For every finite simplicial complex $K$ there is a weak homotopy equivalence $K\to \x (K)$ (see \cite{Mcc}). The simplicial complex of chains of a poset $X$ is denoted by $\kp (X)$. There is a weak homotopy equivalence $\kp (X) \to X$. A simplicial map $\varphi :K\to L$ and a continuous map $f: X\to Y$ between finite $T_0$-spaces induce maps $\x (\varphi)$ and $\kp (f)$ in the obvious way and one has the following commutative diagrams up to homotopy where the vertical maps are the weak homotopy equivalences mentioned above
\begin{displaymath}
\xymatrix@C=18pt{ K \ar@{->}[d] \ar@{->}^{\varphi}[r] & L \ar@{->}[d] \\
									\x (K) \ar@{->}^{\x(\varphi)}[r] & \x (L)}
\hspace{2cm}									
\xymatrix@C=18pt{ X \ar@{->}[d] \ar@{->}^{f}[r] & Y \ar@{->}[d] \\
									\kp (X) \ar@{->}^{\kp (f)}[r] & \kp (Y).}									 
\end{displaymath}

\begin{coro} \label{fppsimplyconnected}
 Let $K$ be a simply connected compact CW-complex. Then there exists a topological space $X$ weak homotopy equivalent to $K$ which has the fixed point property.
 \end{coro}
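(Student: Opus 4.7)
The plan is to invoke Theorem \ref{fixedsimplex} and then transfer the fixed simplex property of a simplicial complex $L$ to the fixed point property of its face poset $\x(L)$.

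Since $K$ is compact CW (hence has only finitely many cells), classical CW theory---simplicially approximating attaching maps skeleton-by-skeleton---lets me replace $K$ by a homotopy equivalent finite simplicial complex, which I continue to call $K$. Simple connectivity gives $H_1(K)=0$, so Theorem \ref{fixedsimplex} supplies a finite simplicial complex $L$ homotopy equivalent to $K$ with the fixed simplex property. I set $X=\x(L)$; the weak homotopy equivalence $L\to \x(L)$ recalled just above the corollary then makes $X$ weak homotopy equivalent to $K$.

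The real content is to deduce that $X$ has the fixed point property. Let $f:X\to X$ be continuous; since $X$ is a finite $T_0$-space, $f$ is just an order-preserving self-map of the poset $\x(L)$. I manufacture an auxiliary simplicial endomorphism $g:L\to L$ by choosing, for each vertex $v$ of $L$, a vertex $g(v)$ of the simplex $f(\{v\})\in L$. To check that $g$ is simplicial, take a simplex $\sigma=\{v_0,\ldots,v_k\}\in L$; since $\{v_i\}\subseteq \sigma$ in $\x(L)$, order-preservation of $f$ yields $f(\{v_i\})\subseteq f(\sigma)$, so $g(v_i)\in f(\sigma)$. Therefore $\{g(v_0),\ldots,g(v_k)\}$ is a subset of the vertex set of $f(\sigma)$ and hence itself a face of $L$.

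By the fixed simplex property some $\tau=\{v_0,\ldots,v_k\}\in L$ satisfies $g(\tau)=\tau$, i.e.\ $\{g(v_0),\ldots,g(v_k)\}=\{v_0,\ldots,v_k\}$ as sets of vertices. Each $v_i$ then equals $g(v_{j(i)})$ for some index $j(i)$, and $g(v_{j(i)})\in f(\{v_{j(i)}\})\subseteq f(\tau)$, so $\tau\subseteq f(\tau)$; that is, $\tau\le f(\tau)$ in the poset $X$. Applying $f$ and invoking order-preservation produces a non-decreasing chain $\tau\le f(\tau)\le f^2(\tau)\le \cdots$ in the finite poset $X$, which must stabilize, and its eventual value is a fixed point of $f$. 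The only conceptual step is the vertex-by-vertex lift $g$, which repackages enough of $f$ to trigger the fixed simplex property of $L$; the remaining monotone-stabilization argument is forced by finiteness of $\x(L)$, so I expect no essential obstacle beyond Theorem \ref{fixedsimplex} itself.
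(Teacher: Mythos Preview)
Your proof is correct and follows essentially the same route as the paper: both construct the simplicial map $g$ by choosing a vertex $g(v)\in f(\{v\})$, invoke the fixed simplex property to obtain a simplex $\tau$ with $g(\tau)=\tau$, deduce $\tau\le f(\tau)$ in $\x(L)$, and then iterate using finiteness. The only cosmetic difference is that the paper phrases the inequality $\tau\le f(\tau)$ as the pointwise comparison $\x(g)\le f$, whereas you verify it directly on the fixed simplex.
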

\begin{proof}
 By the previous theorem there is a finite simplicial complex $L$ homotopy equivalent to $K$ with the fixed simplex property. We claim that the associated finite space $\x (L)$ has the fixed point property. Let $f:\x (L) \to \x (L)$ be a continuous map. For every $v\in L$ choose a vertex $g(v)$ of $L$ such that $g(v) \le f(v)$. The vertex map $g:L\to L$ is simplicial since $f$ maps a bounded set of minimal points into a bounded set. Then $g$ fixes some simplex $\sigma \in L$, so $\x (g):\x (L) \to \x (L)$ fixes $\sigma$. Since $f\ge \x (g)$, $f(\sigma) \ge \sigma$ and then $f^i(\sigma)=f \circ f \circ \ldots \circ f (\sigma)$ is a fixed point of $f$ for $i$ large enough.
\end{proof}

In order to extend the last corollary to non-simply connected complexes, we need to modify the construction of the space $\x (L)$. The idea we used in the proof of Theorem \ref{fixedsimplex} fails if $H_1(K)\neq 0$ since no $1$-dimensional pseudomanifold is asymmetric. We will adapt the proofs of Theorem \ref{fixedsimplex} and Corollary \ref{fppsimplyconnected} to the general case using the rigidity of finite spaces. Recall from \cite{LNM,simple} that the non-Hausdorff mapping cylinder $B_f$ of an order preserving map $f:X\to Y$ between finite $T_0$-spaces is the set $X\sqcup Y$ keeping the given ordering within $X$ and $Y$ and setting $x<y$ for $x\in X$ and $y\in Y$ if $f(x)\le y$. The cylinder $B_f$ deformation retracts to $Y$ so $\kp (B_f)$ deformation retracts to $\kp (Y)$ by a simplicial retraction. If $f$ is a weak homotopy equivalence, $\kp (B_f)$ deformation retracts to $\kp (X)$. If $X$ is a finite $T_0$-space, a point $x\in X$ such that $X_{<x}$ or $X_{>x}$ is contractible is called a \textit{weak point}. In this case the inclusion $X\smallsetminus \{x\} \hookrightarrow X$ is a weak homotopy equivalence. In other words $\kp (X)$ deformation retracts to $\kp (X\smallsetminus \{x\})$ (see \cite{LNM,simple} for more details). 

\begin{lema} \label{lemakun}
 There exists a topological space weak homotopy equivalent to $S^1$ with the fixed point property.
 \end{lema}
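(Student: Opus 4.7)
The strategy, in parallel with Corollary \ref{fppsimplyconnected}, is to construct a finite $T_0$-space $X$ weak homotopy equivalent to $S^1$ such that every order preserving self-map of $X$ has a fixed point; the argument of Corollary \ref{fppsimplyconnected} (replacing $f$ by an order preserving $g \le f$ and then iterating $f$) then transfers this to the fixed point property for arbitrary continuous self-maps. The reason the approach of Theorem \ref{fixedsimplex} does not apply verbatim is that every closed oriented $1$-dimensional pseudomanifold is a disjoint union of circles, no triangulation of which can be asymmetric; hence the rigidification must be carried out at the level of finite spaces rather than at the simplicial level.

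First I would fix a finite poset model $X_0$ of $S^1$, for example the four-point pseudo-circle, or more conveniently a longer ``crown'' (the face poset of a triangulation of $S^1$ by $n \ge 3$ vertices), which affords room for the asymmetrization to come. Then I would attach to $X_0$, via non-Hausdorff mapping cylinders $B_f$ of suitable weak homotopy equivalences, one or more contractible decorations placed in an asymmetric pattern around $X_0$, together with a few auxiliary weak points that can be deleted without altering the weak homotopy type. By the discussion of $B_f$ and of weak points immediately preceding the statement, the resulting space $X$ remains weak homotopy equivalent to $X_0$, and hence to $S^1$, while the decorations single out specific vertices of $X_0$ that any self-map respecting the combinatorial structure is forced to preserve.

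The verification would proceed by a case analysis on an arbitrary order preserving $g : X \to X$. Either $g(X)$ is contained in a contractible subspace of $X$, in which case $g$ is null-homotopic and the finite-space iteration trick of Corollary \ref{fppsimplyconnected} provides a fixed point; or $g$ acts nontrivially on $H_1$, and the combinatorial rigidity arranged in the previous step forces $g$ to preserve the circle part $X_0$ as well as one of the distinguished decorated vertices, which is then a fixed point of $g$.

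The main obstacle will be designing the decorations precisely enough that \emph{every} order preserving self-map of $X$ inducing a nonzero map on $H_1$ is genuinely pinned down, ruling out the rotations and reflections of the underlying $X_0$ that would otherwise survive. Unlike in Theorem \ref{fixedsimplex}, no norm estimates or Lefschetz trace computations are required, because in dimension one a nonzero action on $H_1$ already restricts the combinatorics sharply; the whole difficulty is concentrated in assembling an asymmetrizing gadget (a weak equivalence $f$ and its cylinder $B_f$, together with chosen weak points) that is invisible in the weak homotopy type yet completely breaks the symmetries of $X_0$.
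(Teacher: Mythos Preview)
Your rigidification step has a genuine gap. Contractible decorations attached to a crown $X_0$ cannot break its rotational symmetry: an order-preserving self-map may collapse every decoration to a single point of $X_0$ and then rotate $X_0$, yielding a fixed-point-free map that still induces the identity on $H_1$. For the decoration to constrain the map it must itself carry first homology, and in a combinatorially distinguished way. The paper achieves this by gluing non-Hausdorff mapping cylinders of a degree-$1$ \emph{and a degree-$2$} map between finite circle models, so that a specific four-point subcircle $\{x,y,z,w\}\subseteq\kun$ carries the class $2a$, twice a generator of $H_1$. The decisive assertion is that $zx+xw+wy+yz$ is, up to sign, the \emph{unique} $1$-cycle in $\kp(\kun)$ of norm at most $4$ representing $2a$; this is exactly a norm estimate of the kind used in Theorem~\ref{fixedsimplex}. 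Once Lefschetz reduces to the case $\kp(f)_*=\mathrm{id}$, this uniqueness forces $f(\{x,y,z,w\})=\{x,y,z,w\}$, and a short further analysis of the points above $z$ and $w$ produces the fixed point.

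So your claim that ``no norm estimates or Lefschetz trace computations are required'' is precisely where the plan fails. Lefschetz is what isolates the hard case $f_*=\mathrm{id}$ (your dichotomy ``image contained in a contractible subspace vs.\ nontrivial on $H_1$'' is not exhaustive, and the first branch does not yield a fixed point without Lefschetz anyway), and the degree-$2$ trick together with the norm-uniqueness argument is what replaces the asymmetry of the $M_{k,l}$ that is unavailable in dimension $1$. Attaching cylinders only along weak homotopy equivalences, i.e.\ degree-$\pm 1$ maps as you propose, produces no homologically distinguished short cycle to anchor $f$.
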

 \begin{proof}
 Consider the space $\kun$ of $14$ points in Figure \ref{core}. This space is the \textit{core} of a space considered by G. Kun in \cite[Remark 38]{Kun} in a different context. It is constructed by gluing two non-Hausdorff mapping cylinders of $1$ and $2$-degree maps from an $8$-point model of $S^1$ to a $4$-point model.
 
 \begin{figure}[h] 
\begin{center}
 \includegraphics[scale=0.5]{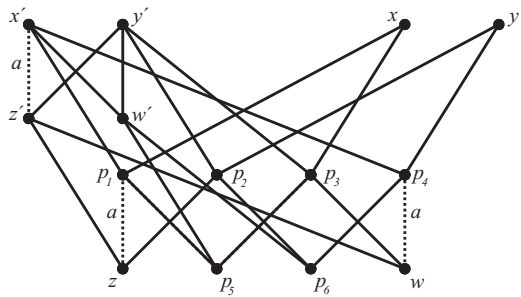}
\caption{The space $\kun$, a finite model of $S^1$ with the fixed point property.}\label{core}
\end{center}
\end{figure} 
 
Since $x$ and $y$ are weak points of $\kun$, $\kun \smallsetminus \{x,y\} \hookrightarrow \kun$ is a weak homotopy equivalence. The space $\kun \smallsetminus \{x,y\}$ is a non-Hausdorff mapping cylinder and then it deformation retracts to $\{x',y',z',w'\}$. Therefore $\kun$ is weak homotopy equivalent to $S^1$. We show that $\kun$ has the fixed point property. 
 
We will prove the following \textbf{assertion}: there is, up to sign, a unique $1$-cycle of norm at most $4$ in the order complex $\kp (\kun)$ which represents the double of a generator of $H_1(\kp (\kun))\simeq \mathbb{Z}$.

An easy way to prove this assertion is by using \textit{colorings} (see \cite{colorings}). The $\mathbb{Z}$-coloring of $\kun$ which colors the solid edges with the identity and the dotted edges with a generator $a$ of $\mathbb{Z}$ is connected and admissible, so it is the standard coloring of $\kun$. To each directed edge $vw$ of the complex $\kp (\kun)$ we assign a weight $\omega (vw)$ which is the sum of the colors of the edges in any increasing path from $v$ to $w$ if $v< w$. If $v>w$, $\omega (vw)=-\omega (wv)$. For example $\omega (zx')=a$, $\omega (w'x')=0$, $\omega (p_4w)=-a$. The map $H_1(\kp (\kun))\to \mathbb{Z}$ which maps the class of a 1-cycle $\sum v_iw_i$ to $\sum \omega (v_iw_i)$ is a well defined isomorphism (see \cite[p.208]{Spa} and \cite{colorings}). It is now easy to check that $c=zx+xw+wy+yz$ is the unique cycle of $\kp (\kun)$ with norm at most $4$ which corresponds to $2a\in \mathbb{Z}$.

Alternatively, in order to prove the assertion, the reader not familiar with colorings may consider the order complex $\kp (X)$ of the poset $X$ given by the solid edges of $\kun$. This complex is contractible and $\kp (\kun)$ is obtained from $\kp (X)$ by adding seven $1$-simplices and six $2$-simplices. Moreover, $\kp (\kun)$ collapses to $\kp (X) \cup \{x'z'\}$ and then the homology of the $1$-cycles of $\kp (\kun)$ of norm $4$ is easy to understand. 

We use now the assertion to prove the fixed point property. Suppose $f:\kun \to \kun$ is a fixed point free map. Then $\kp (f)$ has no fixed point and by the Lefschetz fixed point theorem $\kp (f)_*:H_1(\kp (\kun))\to H_1(\kp (\kun))$ is the identity. Thus $\kp (f)_*(c)=c$ and then $f$ maps $\{x,y,z,w\}$ into itself, so $f(x)=y, f(y)=x$, $f(z)=w$ and $f(w)=z$. In particular the set of points greater than $w$ and $z$ is mapped to itself, so $f(\{x',y',z'\})\subseteq \{x',y',z'\}\sqcup \{x\}\sqcup \{y\}$. If the connected subspace $\{x',y',z'\}$ is mapped into the point $x$ or into $y$, then the generating cycle $z'x'+x'w'+w'y'+y'z'$ is mapped to $0$. Therefore $\{x',y',z'\}$ is mapped into itself and then $f$ has a fixed point, a contradiction. 
 \end{proof}

  
 \begin{proof}[Proof of Theorem \ref{main}]
 We may suppose that $K$ is a finite simplicial complex. We begin with the construction of $L$ performed in the proof of Theorem \ref{fixedsimplex}, except that this time we consider also integer homology classes $\alpha _{1,1}, \alpha_{1,2}, \ldots , \alpha _{1,d_1}$ which are a basis for $H_1(K; \qq)$ and $1$-pseudomanifolds $M_{1,l}$ along with simplicial maps $\varphi _{1,l}:M_{1,l} \to K$ which map $[M_{1,l}]$ to $\alpha _{1,l}$. The pseudomanifolds $M_{k,l}$ will be assumed to be asymmetric for $k\ge 2$, but of course this is not possible for $k=1$.

 The numbers $s_1$, $N_{k,l}$, $N_k$, $s_k$ for $k\ge 2$ are defined as before. Let $s_0=0, N_1=1$ and $N=\max\limits_{1\le k\le n} N_k$. The cylinders $C_{k,l}$ are built just as before, except that $C_{k,l}^b$ will be constructed by gluing not $N$ cylinders but $(n+1)!N$ cylinders of the identity. Also, we include now the $C_{1,l}$'s. The complex $L$ is the union of all these cylinders and it is homotopy equivalent to $K$. The unique difference was the incorporation of the $1$-dimensional manifolds with their cylinders and that we increased the length of the cylinders $C_{k,l}^b$.
 
 The space $X$ will contain $\x (L)$ as a subspace. For each $1\le l\le d_1$ consider a weak homotopy equivalence $\x (M_{1,l})\to \{x',y',z',w'\}$ where the codomain is the subspace of $\kun$ defined in Lemma \ref{lemakun}. Since $\{x',y',z',w'\}\hookrightarrow \kun $ is a weak equivalence, the composition $h_l:\x (M_{l,1})\to \kun $ is a weak equivalence. We take a different copy of $\kun$ for each $1\le l\le d_1$, so the non-Hausdorff mapping cylinders $B_{h_l}$ are disjoint. Let $X=\x (L) \cup \bigcup\limits_{l=1}^{d_1} B_{h_l}$. Since $h_l$ is a weak equivalence, $\kp (B_{h_l})$ deformation retracts to $\kp (\x (M_{1,l}))=M_{1,l}'$ and then $\kp (X)$ deformation retracts to $L'$. Therefore $X$ is weak homotopy equivalent to $K$. We prove that $X$ has the fixed point property.
 
 Let $f:X\to X$ be a continuous map. Then $\kp (f):\kp (X) \to \kp (X)$ is a simplicial map. If $\kp (f)$ fixes a simplex, then $f$ fixes a chain, so $f$ fixes all the elements of the chain. For each $k\ge 1$ we consider the basis $\mathcal{B}_k=\{(i_{k,l}')_*([M_{k,l}^{s_{k-1}+1}]) \otimes 1_{\qq}\}_l$ of $H_k(\kp(X); \qq)$.  Let $k\ge 2$ and $1\le l\le d_k$. Then $\kp (f) (M_{k,l}^{s_{k-1}+1})$ lies in a complex generated by $(k+1)!\|[M_{k,l}^{s_{k-1}}]\|\le (k+1)!N_k\le (n+1)!N$ simplices. We consider now three cases in order to take into account the $1$-dimensional part: 
 
 (1) $\kp (f) (M_{k,l}^{s_{k-1}+1})\subseteq (C^{ab})'$ or
 
 (2) $\kp (f) (M_{k,l}^{s_{k-1}+1})\subseteq (C_{k',l'}^{bc})'$ for some $k'\ge 2$ and some $l'$ or
 
 (3) $\kp (f) (M_{k,l}^{s_{k-1}+1})\subseteq \kp (\x (C_{1,l'}^{bc}) \cup B_{h_{l'}})$ for some $l'$.
 
 In the third case, since $\kp (\x (C_{1,l'}^{bc}) \cup B_{h_{l'}})$ deformation retracts to $(C_{1,l'}^{bc})'$ which in turn deformation retracts into the $1$-dimensional complex $M_{1,l'}'$, then $\kp (f)_* (i_{k,l}')_*[M_{k,l}^{s_{k-1}+1}]=0\in H_k(\kp (X))$.
 
 If (1) holds, $f(\x (M_{k,l}^{s_{k-1}}))\subseteq \x (C^{ab})$. As in the proof of Corollary \ref{fppsimplyconnected}, $f: \x (M_{k,l}^{s_{k-1}})\to \x (C^{ab}) $ induces a simplicial map $g: M_{k,l}^{s_{k-1}} \to C^{ab}$ which is zero in $H_k$ by the proof of Theorem \ref{fixedsimplex}. Then $\x (g): \x (M_{k,l}^{s_{k-1}})\to \x (C^{ab})$ is zero in $k$-homology and the same is true for $f: \x (M_{k,l}^{s_{k-1}})\to \x (C^{ab}) $ since this restriction of $f$ is homotopic to $\x (g)$. Then $\kp (f): M_{k,l}^{s_{k-1}+1} \to (C^{ab})'$ induces the trivial map so $\kp (f)_* (i_{k,l}')_*[M_{k,l}^{s_{k-1}+1}]=0$. This idea works also to adapt the proof of Theorem \ref{fixedsimplex} to the cases (2) for $k'<k$, (2) for $k'>k$, (2) for $k'=k$ and $l'\neq l$. Also, if we are in the case (2) with $k'=k, l'=l$ then $f:\x(M_{k,l}^{s_{k-1}}) \to \x(C_{k,l}^{bc})$ induces $g: M_{k,l}^{s_{k-1}} \to C_{k,l}^{bc}$. If $\kp (f)_*(i_{k,l}')_*[M_{k,l}^{s_{k-1}+1}]\neq 0$, $g_*[M_{k,l}^{s_{k-1}}]\neq 0$ and then by the proof of Theorem \ref{fixedsimplex} $g$ fixes a simplex, so $\x (g)$ fixes a point $\sigma \in \x(M_{k,l}^{s_{k-1}})$. Then $f(\sigma)\ge \x(g)(\sigma)=\sigma$ and $f:X\to X$ has a fixed point. 
 
 We can then assume that the trace of $\kp (f)_*: H_k(\kp (X); \qq) \to H_k(\kp (X); \qq)$ is zero for each $k\ge 2$.
 
 Now we study the $1$-dimensional component. Let $1\le l\le d_1$. Then $\{x,y,z,w\}\subseteq B_{h_l}$ and if $j_l: \{x,y,z,w\} \hookrightarrow X$ denotes the inclusion which factors through $B_{h_l}$ and $\beta_l=[\kp (\{x,y,z,w\})]$ denotes the fundamental class of $\kp (\{x,y,z,w\})$, then $\kp (j_l)_*(\beta _l)=2(i_{1,l}')_*[M_{1,l}']\in H_1(\kp (X))$. Thus $\kp (j_l)_*(\beta _l)\otimes 1_{\qq}$ is twice an element of $\mathcal{B}_1$, the chosen basis for $H_1(\kp (X); \qq)$.
 
Since $(n+1)!N\ge 1$, for each $1\le l\le d_1$ the $l$-th copy of $\kp (\{x,y,z,w\})$ is mapped by $\kp (f)$ into $L'$ or into $\kp ( \x(C^{bc}_{1,l'}) \cup B_{h_{l'}})$ for some $l'$.  
 
 If $\kp (f)(\kp (\{x,y,z,w\}))\subseteq L'$, then it is contained in a contractible subcomplex since any closed edge-path of four edges in a barycentric subdivision is contained in the star of a vertex, and then $\kp (f)_*\kp (j_l)_*(\beta _l)=0\in H_1(\kp (X))$. If $\kp (f)(\kp (\{x,y,z,w\}))$ is contained in $\kp (\x (C_{1,l'}^{bc}) \cup B_{h_{l'}})$ for some $l'\neq l$, then $\kp (fj_l): \kp (\{x,y,z,w\}) \to \kp (\x (C_{1,l'}^{bc}) \cup B_{h_{l'}})$. The codomain of this map deformation retracts to $M_{1,l'}'$ by a retraction $r$ (which is not simplicial). Then $i_{1,l'}'r\kp (fj_l):\kp (\{x,y,z,w\}) \to \kp (X)$ is homotopic to $\kp (f)\kp(j_l)$ and therefore $\kp (f)_*\kp (j_l)_*(\beta _l)$ is an integer multiple of $(i_{1,l'}')_*[M_{1,l'}']$.
 In any of the cases considered so far, the matrix of $\kp (f)_*$ in the basis $\mathcal{B}_1$ has a zero in the entry $(l,l)$. Suppose then that $\kp (f)(\kp \{x,y,z,w\})$ is contained in $\kp (\x (C_{1,l}^{bc}) \cup B_{h_{l}})$. Since $C_{1,l}^{bc}$ deformation retracts to $M_{1,l}$ by a simplicial retraction,  $\kp (\x (C_{1,l}^{bc}) \cup B_{h_{l}})$ deformation retracts to $\kp (B_{h_l})$ by a simplicial retraction. Moreover, $\kp (B_{h_l})$ deformation retracts to $\kp (\kun)$ by a simplicial retraction which maps $M_{1,l}'$ into $\kp (\{x',y',z',w'\})$. Then $\kp (\x (C_{1,l}^{bc}) \cup B_{h_{l}})$ deformation retracts to $\kp (\kun)$ by a simplicial retraction $R:\kp (\x (C_{1,l}^{bc}) \cup B_{h_{l}}) \to \kp (\kun)$ which maps $(C_{1,l}^{bc})'$ into $\kp (\{x',y',z',w'\})$. Thus, $\kp (f)_*\kp (j_l)_*(\beta _l)=i_*R_*\kp (fj_l)_*(\beta _l)$ where $i:\kp (\kun)\to \kp (X)$ denotes the inclusion. Then the homology class $\kp (j_l)_*(\beta _l)=2 (i_{1,l}')_*[M_{1,l}']$ is mapped by $\kp (f)$ to an integer multiple of the generator $(i_{1,l}')_*[M_{1,l}']$ of the image of $i_*$. Therefore, $\kp (j_l)_*(\beta _l)$ is mapped to an integer multiple of itself. Since $f$ is a self map of a finite set, the powers of $f$ induce only finite morphisms in homology, so $\kp (j_l)_*(\beta _l)$ is mapped to $0$, $\kp (j_l)_*(\beta _l)$ or $-\kp (j_l)_*(\beta _l)$. By the Lefschetz fixed point theorem we can assume that for some $1\le l\le d_1$, $\kp (j_l)_*(\beta _l)$ is mapped to itself. Then $R_*\kp (fj_l)_*(zx+xw+wy+yz) \in Z_1(\kp (\kun))$ is a $1$-cycle of norm at most $4$ which represents the double of the generator of $H_1(\kp (\kun))$. By the assertion in Lemma \ref{lemakun} there is a unique cycle satisfying these conditions, which is $zx+xw+wy+yz$. Therefore $R \kp (fj_l)$ maps $\{x,y,z,w\}$ into itself and then $f(\{x,y,z,w\})=\{x,y,z,w\}$. Thus, the set of points smaller than one of those four points, $\{x,y,z,w,p_1,p_2, \ldots ,p_6\}$, is mapped also to itself and then $f$ maps $\kun$ into $\kun$. By Lemma \ref{lemakun}, $f$ has a fixed point.
\end{proof}




\begin{thebibliography}{99}

\bibitem{BB} K. Baclawski, A. Bj\"orner. \textit{Fixed points in partially ordered sets}.
		 Adv. Math. 31(1979), 263-287.

\bibitem{LNM} J.A. Barmak. \textit{Algebraic topology of finite topological spaces and applications}.
		Lecture Notes in Mathematics 2032, Springer, Heidelberg, 2011. xviii+170 pp.

\bibitem{simple} J.A. Barmak, E.G. Minian. \textit{Simple homotopy types and finite spaces}.
		Adv. Math. 218 (2008), Issue 1, 87-104.

\bibitem{colorings} J.A. Barmak, E.G. Minian. \textit{$G$-colorings of posets, covering maps and computation of low-dimensional homotopy groups}.
		 arXiv:1212.6442

 
\bibitem{Hat} A. Hatcher. \textit{Algebraic Topology}.
		Cambridge University Press, Cambridge, 2002. xii+544 pp.



\bibitem{Kun} G. Kun. \textit{On the fundamental group of posets}.
      Master Thesis, E\"otv\"os Lor\'and University (2003).
 
        
\bibitem{Lop} W. Lopez. \textit{An example in the fixed point theory of polyhedra}.
			Bull. Amer. Math. Soc. 73(1967), 922-924.

\bibitem{Mcc} M.C. McCord. \textit{Singular homology groups and homotopy groups of finite topological spaces}.
		Duke Math. J. 33(1966) 465-474.
        
\bibitem{Spa} E. Spanier. \textit{Algebraic Topology}.
McGraw-Hill Book Co., New York-Toronto, Ont.-London 1966 xiv+528 pp.  
      
\bibitem{Wag} R. Waggoner. \textit{A fixed point theorem for $(n-2)$-connected $n$-polyhedra}.
				Proc. Amer. Math. Soc. 33(1972), 143-145.
        
\end{thebibliography}
\end{document}